%%%%%%%%%%%%%%%%%%%%%%%%%%%%%%%%%%%%%%%%%%
\documentclass[a4paper,11pt,reqno]{amsart}
%\documentclass[a4,11pt]{article}
%%%%%%%%%%%%%%%%%%%%%%%%%%%%%%%%%%%%%%%%%%

%%%%%%%%%%%%%%%%%%%%%%%%%%%%%%%%%%%%%%%%%%
\usepackage{amsmath,amsthm,amssymb}
\usepackage{mathrsfs}
\usepackage{braket}
\usepackage{multicol}
\usepackage[all]{xy}
\usepackage{comment}
\usepackage{mathtools}
\usepackage{extarrows}
%%%%%%%%%%%%%%%%%%%%%%%%%%%%%%%%%%%%%%%%%%
%%  Should be uncommented when this file is submitted  
%\usepackage[notref,notcite,color]{showkeys}
%\definecolor{refkey}{rgb}{0.9451,0.2706,0.4941}
%\definecolor{labelkey}{rgb}{0.9451,0.2706,0.4941}
%\usepackage{color}

%%%%%%%%%%%%%%%%%Picture%%%%%%%%%%%%%%%%%%
%\usepackage{pgfplots}
\usepackage{caption}
%\pgfplotsset{compat=1.9}
%\usepackage{graphicx}
\usepackage[dvipdfmx]{graphicx}
%\usepackage{float}
%%%%%%%%%%%%%%%%%%%%%%%%%%%%%%%%%%%%%%%%%%

%% sheet style%%
%\setlength{\hoffset}{-1.1in}
%\setlength{\voffset}{-1.2in}
%\setlength{\oddsidemargin}{1in}
%\setlength{\evensidemargin}{1in}
%\setlength{\textwidth}{6.6in}
%\setlength{\textheight}{9.8in}
%\setlength{\topmargin}{1in}
%\setlength{\baselineskip}{14pt}
%%%%%%%%%%%%%%%%%%%%%%%%%%%%%%%%%%%%%%%%%%%
\usepackage[top=25truemm,bottom=25truemm,left=27truemm,right=27truemm]{geometry}
%%%%%%%%%%%%%%%%%%%%%%%%%%%%%%%%%%%%%%%%%%%

\numberwithin{equation}{section}
\allowdisplaybreaks

%%%%%%%%%%%%%%%%%%%%%%%%%%%%%%%%%%%%%%%%%%
\allowdisplaybreaks
%%%%%%%%%%%%%%%%%%%%%%%%%%%%%%%%%%%%%%%%%%

%%%%%%%%%%%%%%%%%%%%%%%%%%%%%%%%%%%%%%%%%%
%\theoremstyle{definition}
\newtheorem{definition}{Definition}[section]
\newtheorem*{definition*}{Definition}
\newtheorem{proposition}[definition]{Proposition}

\newtheorem{theorem}[definition]{Theorem}
\newtheorem*{theorem*}{Theorem}
\newtheorem{remark}[definition]{Remark}

\newtheorem{fact}[definition]{Fact}
\newtheorem{notation}[definition]{Notation}

%%%%%%%%%%%%%%%%%%%%%%%%%%%%%%%%%%%%%%%%%%

%%%Multi Notations%%%%%%%%%%%%%%%%%%%%%%%%%%%%%%%

%%%%%%%%%%%%%%%%%%%%%%%%%%%%%%%%%%%%%%%%%%

%%%Styles, Accents, and so on%%%%%%%%%%%%%%%%%%%%%%%%

%%%%%%%%%%%%%%%%%%%%%%%%%%%%%%%%%%%%%%%%%%

%%%Have a special meaning%%%%%%%%%%%%%%%%%%%%%%%%%%%

 % power sum 
 % W algebra
 % superscript i of \lambda^{(i)}_k
  %  subscript k of \lambda^{(i)}_k

%%%%%%%%%%%%%%%%%%%%%%%%%%%%%%%%%%%%%%%%%%%

 %E for general parameters

\newcommand{\rA}[1]{\Lambda^{A_{N-1}}_{#1}}
\newcommand{\rB}[1]{\Lambda^{B_{N}}_{#1}}

\newcommand{\todaopA}{\mathsf{D}^{A_{N-1}\mathrm{Toda}}}

\newcommand{\todafnA}{f^{A_{N-1}\mathrm{Toda}}}

%%%Title and Authors%%%%%%%%%%%%%%%%%%%%%%%%%%%%%%
\title[Branching Formula for $q$-Toda Functions of Type B]
{Branching Formula for $q$-Toda Functions of Type B}
%\author{Masayuki~Fukuda}
\author{Ayumu~Hoshino}
\author{Yusuke~Ohkubo}
\author{Jun'ichi~Shiraishi}
%\address{MF: Department of Physics, Faculty of Science, The University of Tokyo, Bunkyo-ku, Tokyo 113-0033, Japan}
%\email{fukuda@hep-th.phys.s.u-tokyo.ac.jp}
\address{AH:  Hiroshima Institute of Technology, 2-1-1 Miyake, Hiroshima 731-5193, Japan}
\email{a.hoshino.c3@it-hiroshima.ac.jp}
 \address{YO: Daiichi University of Pharmacy, 
22-1 Tamagawa-cho, Minami-ku, Fukuoka 815-8511, Japan}
 \email{yusuke.ohkubo.math@gmail.com}
\address{JS: Graduate School of Mathematical Sciences, The University of Tokyo, Komaba, Tokyo 153-8914, Japan}
 \email{shiraish@ms.u-tokyo.ac.jp}
%%%%%%%%%%%%%%%%%%%%%%%%%%%%%%%%%%%%%%%%%%%

\begin{document}

\begin{abstract}
We present a proof of the explicit formula for the asymptotically free eigenfunctions of the $B_N$ $q$-Toda operator which was conjectured by the first and third authors. 
This formula can be regarded as a branching formula from the $B_N$ $q$-Toda eigenfunction restricted to the $A_{N-1}$ $q$-Toda eigenfunctions. 
The proof is given by a contiguity relation of the $A_{N-1}$ $q$-Toda eigenfunctions 
and a recursion relation of the branching coefficients. 
\end{abstract}

\maketitle

\section{Introduction}

Let $f^{A_{N-1}{\rm Toda}}(x|s|q)$ and $f^{B_N{\rm Toda}}(x|s|q)$ 
be the asymptotically free eigenfunctions of 
the $A_{N-1}$ and $B_N$ $q$-Toda operators, respectively 
(Definition \ref{def: toda fn type A}, Definition \ref{def: toda fn type B}). 
Here, $q$ is a generic parameter, and $x=(x_1,\ldots,x_N)$ is an $N$-tuple of variables. 
We introduce an $N$-tuple of continuous parameters (or indeterminates) 
$s=(s_1, \ldots s_N)$, 
while the ordinary $q$-Toda functions contain a weight as a set of discrete parameters. 
A combinatorial explicit formula is known for 
the asymptotically free eigenfunctions of Macdonald's difference operator of type A \cite{Shiraishi2005conjecture,NS2012direct,BFS2014Macdonald}, 
and the one of the $A_{N-1}$ $q$-Toda functions $f^{A_{N-1}{\rm Toda}}(x|s|q)$  can be given by taking a certain limit ($t \rightarrow 0$) of that formula. 
The aim of this paper is to prove the following explicit formula 
for $f^{B_N{\rm Toda}}(x|s|q)$ in terms of $f^{A_{N-1}{\rm Toda}}(x|s|q)$ 
that  was conjectured in \cite{HS2020branching}.

\begin{theorem*}{\bf \ref{thm: branchin rule}.}
The $B_N$ $q$-Toda function $f^{B_N{\rm Toda}}(x|s|q)$ 
is of the form
\begin{align}\label{eq: main result intro}
&f^{B_N{\rm Toda}}(x_1,\ldots,x_N|s_1,\ldots,s_N|q)\nonumber \\
=&
\sum_{\theta=(\theta_1,\ldots,\theta_N)\in \mathbb{Z}^N_{\geq 0}}e^{B_N/A_{N-1}}_{\theta}(s|q) \cdot \prod_{i=1}^N x_i^{-\theta_i} \cdot
f^{A_{N-1}{\rm Toda}}(x_1,\ldots ,x_N|q^{-\theta_1}s_1,\ldots ,q^{-\theta_N}s_N|q),
%\label{eq: branchi}
\end{align}
where we have set
\begin{align}
e^{B_N/A_{N-1}}_{\theta}(s|q):=
&\prod_{k=1}^N
{q^{(N-k+1)\theta_k} \over (q;q)_{\theta_k}(q/s_k^2;q)_{\theta_k}}\\
&\times
\prod_{1\leq i<j\leq N}
{1\over (q s_j/s_i;q)_{\theta_i} (q^{\theta_j-\theta_i} q s_i/s_j;q)_{\theta_i}}
{(q/s_is_j;q)_{\theta_i+\theta_j}\over (q/s_is_j;q)_{\theta_i} (q/s_is_j;q)_{\theta_j}},
\nonumber
\end{align}
%and $(a;q)_n:=\prod_{k=1}^n(1-q^{k-1}a)$. 
$(a;q)_n:=\frac{(a;q)_{\infty}}{(q^na;q)_{\infty}}$, and
$(a;q)_{\infty}:=\prod_{k=1}^{\infty}(1-q^{k-1}a)$. 
\end{theorem*}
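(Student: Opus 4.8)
The plan is to characterize both sides as asymptotically free eigenfunctions and to verify that the right-hand side satisfies the defining properties of $\todafnB(x|s|q)$. Write $F(x|s|q)$ for the right-hand side of \eqref{eq: main result intro}. Since $\todafnB$ is, by Definition \ref{def: toda fn type B}, the unique formal solution of $\todaopB\,\Psi = \varepsilon\,\Psi$ whose leading term is normalized in the asymptotically free regime, it suffices to check two things: that $F$ has the correct leading asymptotics, and that $F$ is a formal eigenfunction of $\todaopB$ with the appropriate eigenvalue $\varepsilon$.

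For the asymptotics, I would first observe that $e^{B_N/A_{N-1}}_{\boldsymbol{0}}(s|q) = 1$, since every $q$-Pochhammer symbol with index $0$ equals $1$; hence the $\theta = \boldsymbol{0}$ summand of $F$ is exactly $\todafnA(x|s|q)$. Because each summand with $\theta \neq \boldsymbol{0}$ carries the extra monomial factor $\prod_{i=1}^N x_i^{-\theta_i}$, it is strictly subleading in the variables governing the asymptotic expansion, so the leading term of $F$ coincides with that of $\todafnA$, which is precisely the normalization adopted for $\todafnB$. This also confirms that $F$ is a well-defined element of the relevant ring of formal series.

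The heart of the argument, following the strategy announced in the abstract, is to show $\todaopB F = \varepsilon F$ by reducing the action of $\todaopB$ on each summand to the $A_{N-1}$ data. Each function $\todafnA(x|q^{-\theta_1}s_1,\ldots,q^{-\theta_N}s_N|q)$ is an eigenfunction of $\todaopA$ with eigenvalue depending on the shifted spectral parameters $\ts{1},\ldots,\ts{N}$, and I would split $\todaopB$ into the $A_{N-1}$ operator together with the additional terms dictated by the roots of $B_N$ lying outside the $A_{N-1}$ subsystem. Applying these extra terms to $\prod_{i=1}^N x_i^{-\theta_i}\,\todafnA(x|q^{-\theta_1}s_1,\ldots,q^{-\theta_N}s_N|q)$ and invoking the contiguity relation for the $A_{N-1}$ Toda eigenfunctions, which expresses such operators acting on $\todafnA$ in terms of the same functions with $\theta$ shifted by one in a single component, one rewrites $\todaopB F$ as a new sum over $\theta$ of the linearly independent basis elements $\prod_{i=1}^N x_i^{-\theta_i}\,\todafnA(x|q^{-\theta_1}s_1,\ldots,q^{-\theta_N}s_N|q)$. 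Demanding that this equal $\varepsilon F$ and matching the coefficient of each basis element then yields a first-order recursion relating $e^{B_N/A_{N-1}}_{\theta}(s|q)$ to its neighbours $e^{B_N/A_{N-1}}_{\theta - \mathbf{1}_k}(s|q)$.

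It then remains to verify that the explicit product formula for $e^{B_N/A_{N-1}}_{\theta}(s|q)$ solves this recursion together with the initial condition $e^{B_N/A_{N-1}}_{\boldsymbol{0}} = 1$. This is a direct, if lengthy, manipulation of ratios of $q$-Pochhammer symbols: each shift $\theta_k \mapsto \theta_k - 1$ changes the product by a computable rational factor in the $q^{\theta_i}$ and the $s_i$, and one checks that these factors reproduce exactly the coefficients produced by the contiguity relation. The main obstacle I anticipate is the middle step, namely pinning down the precise contiguity relation for $\todafnA$ and bookkeeping the resulting coefficients so that the recursion closes cleanly on the single-component shifts $\theta \mapsto \theta - \mathbf{1}_k$; once the recursion is in hand, the final $q$-Pochhammer verification is routine.
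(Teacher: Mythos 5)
Your proposal follows essentially the same route as the paper: characterize $f^{B_N{\rm Toda}}$ by its eigenvalue equation and normalization, act with $\mathsf{D}^{B_N{\rm Toda}}(x|s|q)$ on the proposed expansion, handle the extra terms via a contiguity relation for $f^{A_{N-1}{\rm Toda}}$ (Proposition \ref{prop: contigulation}), and match coefficients to get a first-order recursion (Proposition \ref{prop: rec. rel.}) that the explicit $q$-Pochhammer product is checked to solve. The one ingredient you leave implicit is the inversion symmetry $f^{A_{N-1}{\rm Toda}}(x|s|q)=f^{A_{N-1}{\rm Toda}}((x_{N-i+1}^{-1})_{i}|(s_{N-i+1}^{-1})_{i}|q)$, which absorbs the entire lowering half of $\mathsf{D}^{B_N{\rm Toda}}$ into a second $A_{N-1}$ eigenvalue equation, so that only the single term $-q^{-\theta_N}s_N x_N^{-1}T_{q,x_N}$ actually needs the contiguity relation.
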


The $q$-Toda system has been  studied 
in the connection with representation theory of the quantum groups. 
In particular, the eigenfunctions of the $q$-Toda operators 
can be constructed by Whittaker functions in the Verma module \cite{Etingof1999whittaker,Sevostyanov2000quantum} and 
expressed via  fermionic formulas \cite{FFJMM2009fermionic}. 
Moreover, the $q$-Toda functions are closely related to  characters of Demazure modules 
\cite{GLO2010qDeformed1,GLO2010qDeformed2,GLO2011qDeformed3} and the equivariant K-theory of Laumon spaces \cite{GL2003quantum,BF2005finite}.

The main result  (\ref{eq: main result intro})  can be regarded as a branching rule 
for  the $B_N$ $q$-Toda function restricted to the $A_{N-1}$ $q$-Toda eigenfunctions. 
The proof is given by direct calculation, 
in which  we give a contiguity relation of $f^{A_{N-1}{\rm Toda}}(x|s|q)$ 
(Proposition \ref{prop: contiguity}).
It is an interesting problem to find similar branching formulas for $q$-Toda functions of other types. 
%and the lift to the Macdonald functions. 

This paper is organized as follows. 
In Section \ref{sec: A B Toda}, 
we recall the definitions of the $q$-Toda functions 
%$f^{B_N{\rm Toda}}(x|s|q)$, 
%$f^{A_{N-1}{\rm Toda}}(x|s|q)$ 
and state the main theorem. 
The proof is given in Section \ref{sec: pf branching rule}.

%\section{Branching formula for $B_N$ Toda function}\label{sec: branching rul}
\section{$A_{N-1}$ and $B_N$ $q$-Toda functions}\label{sec: A B Toda}

First, we recall the aymptotically free eigenfunctions for $A_{N-1}$ $q$-Toda operator. 
Let $q$ be a generic parameter and let $s=(s_1,\ldots, s_N)$ be an $N$-tuple of indeterminates. 
Set
\begin{align}
&\rA{\mathbb{Q}(s,q)}= \mathbb{Q}(s,q)[[x_2/x_1,\ldots, x_N/x_{N-1}]]. 
%\quad (\mathbb{F}: \mbox{field}).
\end{align}

\begin{definition}
Let $x=(x_1,\ldots, x_N)$. 
The $q$-Toda operator $\todaopA(x|s|q)$ of type A acting on 
$\rA{\mathbb{Q}(s,q)}$
is defined to be
\begin{align}
\todaopA(x|s|q)=\sum_{i=1}^{N-1} s_i (1-x_{i+1}/x_i)T_{q,x_i}+s_N T_{q,x_N}.
\end{align}
Here, $T_{q,x_i}$ is the difference operator defined by 
\begin{align}
T_{q,x_i}f(x_1,\ldots,x_N)=f(x_1,\ldots,qx_i,\ldots,x_N). 
\end{align} 
\end{definition}

The eigenfunctions of $\todaopA(x|s|q)$ is given as follows. 
We use the notation in \cite{HS2020branching}. 

\begin{definition}[\cite{GLO2010qDeformed1}]\label{def: toda fn type A}
Set 
\begin{align}\label{eq: fAN-1}
&f^{A_{N-1}{\rm Toda}}(x|s|q)
= \sum_{\boldsymbol{\theta}\in\mathsf{M}^{(N)}}c^{\rm Toda}_N(\boldsymbol{\theta};s;q)
\prod_{1\leq i<j \leq N} (x_j/x_i)^{\theta_{i,j}}.
\end{align}
Here, $\mathsf{M}^{(N)}=
\{\boldsymbol{\theta}
=(\theta_{ij})_{i,j=1}^{N}| \theta_{ij} \in \mathbb{Z}_{\geq 0}, \theta_{kl}=0 \mbox{ if } k\geq l \}$ is the set of $N\times N$ strictly upper triangular matrices with non-negative integer entries, and the coefficients $c^{\rm Toda}_N(\boldsymbol{\theta};s;q)$ are defined by 
\begin{align}\label{eq: cToda}
&c^{\rm Toda}_N(\boldsymbol{\theta};s;q)\\
%= \lim_{t\rightarrow 0} c_N(\boldsymbol{\theta};s;q,t) \prod_{1\leq i<j\leq N} t^{( j-i)\theta_{i,j}} \\
&=
\prod_{k=2}^{N}
\prod_{1\leq i\leq j\leq k-1}
{1\over
(q^{\sum_{a=k+1}^N(\theta_{i,a}-\theta_{j+1,a})}qs_{j+1}/s_i;q)_{\theta_{i,k}}}
{q^{\theta_{i,k}}
\over
(q^{\theta_{j,k}-\theta_{i,k}-\sum_{a=k+1}^N(\theta_{i,a}-\theta_{j,a})}q s_i/s_j;q)_{\theta_{i,k}}}.\nonumber
\end{align}
\end{definition}

\begin{fact}[\cite{GLO2010qDeformed1,HS2020branching}]\label{fact: A Toda}
We have 
\begin{align}
\todaopA(x|s|q) \,\, 
\todafnA(x|s|q)=
\sum_{i=1}^N s_i \,\, \todafnA(x|s|q).\label{eq: eigenvalue eq A}
\end{align}
\end{fact}

This formula was originally proved in \cite{GLO2010qDeformed1}. 
A combinatorial explicit formula was given for the asymptotically free eigenfunctions of the Macdonald operator in \cite{Shiraishi2005conjecture,NS2012direct,BFS2014Macdonald}, 
and the formula $f^{A_{N-1}{\rm Toda}}$ can also be directly 
obtained by taking a certain limit $t\rightarrow 0$ of that combinatorial formula \cite{HS2020branching}. 
%We follow the notation in \cite{HS2020branching} which obtained by the limit $t\rightarrow 0$. 
As for the limit from the Macdonald functions to the Toda functions, 
see also \cite{GLO2010qDeformed1,GLO2010qDeformed2,GLO2011qDeformed3}. 
%The formula $f^{A_{N-1}{\rm Toda}}$ is essentially equivalent to the explicit formula originally proved in \cite{GLO2010qDeformed1,GLO2010qDeformed2,GLO2011qDeformed3}. 

\begin{notation}
We introduce
\begin{align}
 d_N^{\mathrm{Toda}}((\theta_{i,n})_{1\leq i \leq N-1};(s_i)_{1\leq i \leq N};q)
:=\frac{c^{\mathrm{Toda}}_N((\theta_{i,j})_{1\leq i<j\leq N};(s_i)_{1\leq i \leq N}|q)}
{c^{\mathrm{Toda}}_{N-1}((\theta_{i,j})_{1\leq i<j\leq N-1}; (q^{-\theta_{i,n}}s_i)_{1\leq i \leq N-1}|q)} \quad (N\geq 2). 
\end{align}
Then, $d_N^{\mathrm{Toda}}$ is of the form 
\begin{align}
&d_N^{\mathrm{Toda}}((\theta_i)_{1\leq i \leq N-1};(s_i)_{1\leq i \leq N};q)	\nonumber  \\
& \qquad =\prod_{i=1}^{N-1}
 { 1 \over (q;q)_{ \theta_{i}} }
{q^{\theta_{i}} \over (q s_N/s_i;q)_{\theta_{i}} }
\prod_{1\leq i<j\leq N-1} {1 \over (q s_j/s_i;q)_{\theta_{i}} }
{q^{\theta_i} \over 
	(q^{\theta_{j}-\theta_i+1 }s_i/s_j;q)_{\theta_{i}} }, \label{eq: form of d}
\end{align}
and the $A_{N-1}$ $q$-Toda function can be expressed as 
\begin{align}\label{eq: A_N-1 to A_N-2}
&\todafnA(x|s|q)\\
&=\sum_{\theta =(\theta_1,\ldots , \theta_{N-1}) \in \mathbb{Z}_{\geq 0}^{N-1}} 
 d_N^{\mathrm{Toda}}(\theta ;s;q)	
\prod_{i=1}^{N-1} (x_N/x_i)^{\theta_i}\cdot 
f^{A_{N-2}{\rm Toda}}(x|(q^{-\theta_{i}}s_i)_{1\leq i \leq N-1}|q). \nonumber 
\end{align} 
\end{notation}

Although (\ref{eq: A_N-1 to A_N-2}) follows from the case of the Macdonald functions, 
we can also prove (\ref{eq: A_N-1 to A_N-2}) in a similar manner to Section \ref{sec: pf branching rule}.

Now, we turn to the case of type B. 
Set 
\begin{align}
\rB{\mathbb{Q}(s,q)}=\mathbb{Q}(s,q)[[x_2/x_1,\ldots, x_N/x_{N-1},1/x_N]]. % \quad %(\mathbb{F}: \mbox{field}).
\end{align}

\begin{definition}
Define the $B_N$ $q$-Toda operator $\mathsf{D}^{B_N{\rm Toda}}(x|s|q)$ 
acting on $\Lambda^{B_N}_{\mathbb{Q}(s,q)}$ by 
\begin{align}
\mathsf{D}^{B_N{\rm Toda}}(x|s|q)=&
\sum_{i=1}^{N-1}s_i (1-x_{i+1}/x_i)T_{q,x_i}+
s_N (1-1/x_N)T_{q,x_N}\\
& +
s_1^{-1}T_{q,x_1}^{-1}+
\sum_{i=2}^N s_i^{-1} (1-x_i/x_{i-1})T_{q,x_i}^{-1}.\nonumber
\end{align}
\end{definition}

This operator can be obtained by the limit  of the $B_N$ Macdonald operator \cite{HS2020branching}. 
As for the description of the $q$-Toda operators by the quantum groups, 
see \cite{Etingof1999whittaker,Sevostyanov2000quantum,FFJMM2009fermionic}.

\begin{definition}\label{def: toda fn type B}
The asymptotically free eigenfunction $f^{B_N{\rm Toda}}(x|s|q)\in\Lambda^{B_N}_{\mathbb{Q}(s,q)}$ 
of the $B_N$ $q$-Toda operator is defined by 
\begin{align}
&\mathsf{D}^{B_N{\rm Toda}}(x|s|q)f^{B_N{\rm Toda}}(x|s|q)
=\sum_{i=1}^N(s_i+s_i^{-1})f^{B_N{\rm Toda}}(x|s|q),\\
&\left[ f^{B_N{\rm Toda}}(x|s|q) \right]_{x,1}=1. 
\end{align}
Here, $\left[ \quad \right]_{x,1}$ means the constant term with respect to $x_i$'s. 
\end{definition}

Note that $ f^{B_N{\rm Toda}}(x|s|q)$ is uniquely determined. 
We obtain an explicit formula for the $B_N$ $q$-Toda function $f^{B_N{\rm Toda}}(x|s|q)$ 
in terms of the $A_{N-1}$ $q$-Toda functions $f^{A_{N-1}{\rm Toda}}(x|s|q)$. 

\begin{theorem}\label{thm: branchin rule}
%Let $\theta=(\theta_1,\ldots,\theta_N)$. 
The $B_N$ $q$-Toda function $f^{B_N{\rm Toda}}(x|s|q)$ 
satisfies the branching formula
\begin{align}
&f^{B_N{\rm Toda}}(x_1,\ldots,x_N|s_1,\ldots,s_N|q)\nonumber \\
=&
\sum_{\theta=(\theta_1,\ldots,\theta_N)\in \mathbb{Z}^N_{\geq 0}}e^{B_N/A_{N-1}}_{\theta}(s|q) \cdot \prod_{i=1}^N x_i^{-\theta_i} \cdot
f^{A_{N-1}{\rm Toda}}(x_1,\ldots ,x_N|q^{-\theta_1}s_1,\ldots ,q^{-\theta_N}s_N|q),
\label{eq: branchi}
\end{align}
where we have set
\begin{align}\label{eq: e sol}
e^{B_N/A_{N-1}}_{\theta}(s|q):=
&\prod_{k=1}^N
{q^{(N-k+1)\theta_k} \over (q;q)_{\theta_k}(q/s_k^2;q)_{\theta_k}}\\
&\times
\prod_{1\leq i<j\leq N}
{1\over (q s_j/s_i;q)_{\theta_i} (q^{\theta_j-\theta_i} q s_i/s_j;q)_{\theta_i}}
{(q/s_is_j;q)_{\theta_i+\theta_j}\over (q/s_is_j;q)_{\theta_i} (q/s_is_j;q)_{\theta_j}}, 
\nonumber
\end{align}
%and $(a;q)_n:=\prod_{k=1}^n(1-q^{k-1}a)$. 
$(a;q)_n:=\frac{(a;q)_{\infty}}{(q^na;q)_{\infty}}$, and
$(a;q)_{\infty}:=\prod_{k=1}^{\infty}(1-q^{k-1}a)$. 
Note that the constant term of $\todafnA$ is $1$. 
Hence, the constant term of (\ref{eq: branchi}) is also $1$.
\end{theorem}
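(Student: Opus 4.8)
\noindent\emph{Proof strategy.}
The plan is to show that the right-hand side of \eqref{eq: branchi} is a well-defined element of $\rB{\mathbb{Q}(s,q)}$ satisfying the two defining properties of $\todafnB(x|s|q)$ in Definition~\ref{def: toda fn type B}, and then to conclude by the uniqueness asserted there. Abbreviate
\[
g_{\theta}:=\prod_{i=1}^{N}x_i^{-\theta_i}\cdot\todafnA\bigl(x\,\big|\,(q^{-\theta_i}s_i)_{1\le i\le N}\,\big|\,q\bigr),
\qquad
F:=\sum_{\theta\in\mathbb{Z}_{\ge0}^N}e^{B_N/A_{N-1}}_{\theta}(s|q)\,g_{\theta}.
\]
Since $\todafnA(x|s|q)$ is a power series in $x_2/x_1,\dots,x_N/x_{N-1}$ whose lowest term is $c^{\rm Toda}_N(\boldsymbol{0};s;q)=1$, writing $x_i^{-1}=\prod_{k=i}^{N}y_k$ with $y_k=x_{k+1}/x_k$ $(k<N)$ and $y_N=1/x_N$ shows that the lowest monomial of $g_\theta$ is $\prod_{k=1}^{N}y_k^{\theta_1+\cdots+\theta_k}$, of $(1/x_N)$-order $\theta_1+\cdots+\theta_N$. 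Hence for each fixed order only finitely many $\theta$ contribute, so $F$ is a well-defined element of $\rB{\mathbb{Q}(s,q)}$; moreover $\theta\mapsto(\theta_1+\cdots+\theta_k)_{1\le k\le N}$ is injective, so the lowest monomials of the $g_\theta$ are pairwise distinct and the $g_\theta$ are linearly independent over $\mathbb{Q}(s,q)$. Only $\theta=\boldsymbol{0}$ yields a constant lowest monomial, and since $e^{B_N/A_{N-1}}_{\boldsymbol{0}}(s|q)=1$ we get $[F]_{x,1}=1$, which is the normalization condition.

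It then remains to verify the eigenvalue equation $\todaopB(x|s|q)\,F=\sum_{i=1}^N(s_i+s_i^{-1})\,F$. First I would split
\[
\todaopB(x|s|q)=\todaopA(x|s|q)-s_N x_N^{-1}T_{q,x_N}+R,
\qquad
R:=s_1^{-1}T_{q,x_1}^{-1}+\sum_{i=2}^N s_i^{-1}(1-x_i/x_{i-1})T_{q,x_i}^{-1}.
\]
Using the intertwining relations $T_{q,x_i}^{\pm1}\circ\prod_j x_j^{-\theta_j}=q^{\mp\theta_i}\,\prod_j x_j^{-\theta_j}\circ T_{q,x_i}^{\pm1}$ together with the fact that multiplication operators commute with the prefactor $\prod_j x_j^{-\theta_j}$, the genuine type-$A$ operator $\todaopA(x|s|q)$ conjugates on $g_\theta$ into $\todaopA(x|(q^{-\theta_i}s_i)_i|q)$ acting on $\todafnA(x|(q^{-\theta_i}s_i)_i|q)$; Fact~\ref{fact: A Toda} then gives the purely diagonal action
\[
\todaopA(x|s|q)\,g_\theta=\Bigl(\sum_{i=1}^N q^{-\theta_i}s_i\Bigr)g_\theta.
\]

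For the remaining operator $-s_N x_N^{-1}T_{q,x_N}+R$ I would invoke the contigulation relation of Proposition~\ref{prop: contigulation}, which rewrites the action of the backward shifts $T_{q,x_i}^{-1}$ (together with the multiplications by $x_i^{-1}$ and $x_i/x_{i-1}$) on $\todafnA(x|(q^{-\theta_i}s_i)_i|q)$ as a finite $\mathbb{Q}(s,q)$-linear combination of the functions $\todafnA$ with parameters shifted to neighbouring $\theta$. After moving the prefactor through as above, this expresses $(-s_N x_N^{-1}T_{q,x_N}+R)\,g_\theta=\sum_{\theta'}B_{\theta,\theta'}\,g_{\theta'}$ with explicit coefficients $B_{\theta,\theta'}$ supported on a few neighbours $\theta'$ of $\theta$. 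Combining the diagonal and off-diagonal contributions and using the linear independence of the $g_{\theta'}$, the eigenvalue equation becomes equivalent to the recursion
\[
\sum_{\theta}e^{B_N/A_{N-1}}_{\theta}(s|q)\,B_{\theta,\theta'}
=\Bigl(\sum_{i=1}^N s_i\bigl(1-q^{-\theta'_i}\bigr)+\sum_{i=1}^N s_i^{-1}\Bigr)\,e^{B_N/A_{N-1}}_{\theta'}(s|q)
\]
for every $\theta'\in\mathbb{Z}_{\ge0}^N$.

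Finally I would substitute the explicit product \eqref{eq: e sol} and the coefficients $B_{\theta,\theta'}$ into this recursion and check it by manipulating $q$-Pochhammer symbols: forming the ratios $e^{B_N/A_{N-1}}_{\theta}(s|q)/e^{B_N/A_{N-1}}_{\theta'}(s|q)$ over the relevant neighbours $\theta$ makes most factors telescope into elementary rational functions of $q^{\theta'_i}$ and $s_i$, reducing the identity to a $q$-binomial-type summation. I expect this to be the main obstacle. The forward computation and the normalization are routine, but pinning down $B_{\theta,\theta'}$ precisely from Proposition~\ref{prop: contigulation}---in particular how the reflection terms $s_i^{-1}T_{q,x_i}^{-1}$ interact with the shifted parameters $(q^{-\theta_i}s_i)_i$---and then verifying the resulting Pochhammer identity uniformly in $N$ and in all the $\theta'_i$ is exactly where the precise shape of \eqref{eq: e sol} is forced and where the bulk of the work lies.
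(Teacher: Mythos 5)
Your overall strategy (check the normalization, check the eigenvalue equation, conclude by uniqueness) is the same as the paper's, and your discussion of well-definedness, linear independence of the $g_\theta$, and the normalization $[F]_{x,1}=1$ is a sound supplement to what the paper leaves implicit. But there is a genuine gap in the central step: you do not actually have a way to compute the action of the backward-shift part
\[
R=s_1^{-1}T_{q,x_1}^{-1}+\sum_{i=2}^N s_i^{-1}\bigl(1-x_i/x_{i-1}\bigr)T_{q,x_i}^{-1}
\]
on $g_\theta$. You propose to ``invoke the contigulation relation of Proposition~\ref{prop: contigulation}'' for this, but that proposition only expands $\todafnA(x_1,\dots,x_{N-1},qx_N|s|q)$, i.e.\ a single \emph{forward} $q$-shift of the last variable; it says nothing about $T_{q,x_i}^{-1}$ for general $i$, and no analogue for backward shifts is available in the paper. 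The paper's key observation, which you are missing, is that $R$ is itself the $A_{N-1}$ Toda operator in the reversed--inverted variables: under $y_j=x_{N-j+1}^{-1}$ one has $R=\todaopA\bigl(y\,\big|\,(s_{N-j+1}^{-1})_j\,\big|\,q\bigr)$, and combined with the symmetry $\todafnA(x|s|q)=\todafnA\bigl((x_{N-i+1}^{-1})_i\,\big|\,(s_{N-i+1}^{-1})_i\,\big|\,q\bigr)$ and Fact~\ref{fact: A Toda}, this makes $R$ act \emph{diagonally} on $g_\theta$ with eigenvalue $\sum_i q^{\theta_i}s_i^{-1}$. The contigulation relation is then needed only for the single leftover term $-s_N x_N^{-1}T_{q,x_N}$, which is exactly a forward shift of $x_N$. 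Without this identification your coefficients $B_{\theta,\theta'}$ are undefined, and your target recursion (whose right-hand side carries $\sum_i s_i^{-1}$ rather than the correct $\sum_i s_i^{-1}(1-q^{\theta'_i})$) does not match the identity that actually needs to be proved.

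Separately, even granting the correct recursion (the paper's Proposition~\ref{prop: rec. rel.}), you leave its verification as ``a $q$-binomial-type summation'' to be checked. This is the second substantive half of the proof: after substituting \eqref{eq: e sol}, the recursion reduces to the rational identity
\[
\sum_{i=1}^{N}\bigl((1-Q_i)s_i+(1-Q_i^{-1})s_i^{-1}\bigr)
=\sum_{k=1}^N s_k\,
\frac{\prod_{i=1}^N (1-Q_is_i/s_k)(1-Q_i^{-1}/s_is_k)}
{\prod_{i\neq k}(1-s_i/s_k)(1-1/s_is_k)},
\]
which the paper proves by a residue/Laurent-expansion argument in each $s_\ell$ (showing the apparent poles cancel, bounding the pole orders at $0$ and $\infty$, and evaluating the constant term at $s_i=\sqrt{Q_i}^{-1}$). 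A parallel partial-fraction identity underlies Proposition~\ref{prop: contigulation} itself. As it stands, your proposal identifies the right framework but omits both the structural trick that makes $R$ tractable and the closed-form identities that finish the computation.
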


This formula was conjectured in \cite{HS2020branching}. 
The proof is given in the next subsection.

\begin{remark}
The region of convergence of $\todafnA$ can be derived 
from the case of the Macdonald functions 
(Proposition 6.1 in \cite{NS2012direct}) by taking the limit $t\rightarrow 0$. 
It is an interesting problem to consider the convergence of the formula  (\ref{eq: branchi}).
\end{remark}

\section{Proof of Theorem \ref{thm: branchin rule}}\label{sec: pf branching rule}

In this section, 
we prove Theorem \ref{thm: branchin rule}. 
First we give the following 
relation of the $q$-Toda functions of type A. 

\begin{proposition}\label{prop: contiguity}
The $q$-Toda functions of type A satisfy the contiguity relation
\begin{align}\label{eq: contiguity}
&f^{A_{N-1}{\rm Toda}}(x_1,\ldots , x_{N-1},q x_N|s|q)\\
&=\sum_{k=1}^N(-1)^{N-k}\frac{q^{N-k}\prod_{i=k+1}^{N-1} s_i/s_k  }
{\prod_{i=k+1}^N (1-s_i/s_k)(1-q s_i/s_k)}
(x_N/x_k)
f^{A_{N-1}{\rm Toda}}(x_1,\ldots , x_N|q^{-\varepsilon_k}\cdot s|q). \nonumber 
\end{align}
Here, we used the notation 
\begin{align}\label{eq: varepsilon def}
q^{\pm \varepsilon_i}\cdot s=(s_1,\ldots s_{i-1},q^{\pm 1}s_i,s_{i+1},\ldots, s_N).
\end{align} 
\begin{proof}
First, we show the following equation of the rational functions of $a_i$ and $s_i$: 
\begin{align}\label{eq: pf contiguity}
\prod_{i=1}^{N-1} a_i=\sum_{k=1}^N (s_k/s_N)
\frac{\prod_{i=1}^{N-1}(1-a_i s_k/s_i)}
{\prod_{\substack{1\leq i\leq N \\i\neq k}}(1-s_k/s_i)}.
\end{align} 
Regarding $s_i$'s in the RHS as complex variables, we set 
\begin{align}
F(s):=\sum_{k=1}^{N} (s_k/s_N)\frac{\prod_{i=1}^{N-1}(1-a_i s_k/s_i)}
{\prod_{\substack{1\leq i\leq N \\i\neq k}}(1-s_k/s_i)}
=\sum_{k=1}^{N} \frac{\prod_{i=1}^{N-1}(s_i-a_i s_k)}{\prod_{\substack{1\leq i\leq N \\i\neq k}}(s_i-s_k)}.
\end{align}
For any $\ell=1,\ldots N$, 
the residue at $s_{\ell}=s_{\ell'}$ ($\ell'\neq \ell$) is 
\begin{align}
\mathop{Res}_{s_{\ell}=s_{\ell'}}F(s)\nonumber 
&= \lim_{s_{\ell}\rightarrow s_{\ell'}} F(s)(s_{\ell} -s_{\ell'})\nonumber \\
&=\lim_{s_{\ell}\rightarrow s_{\ell'}}
\left(- \frac{\prod_{i=1}^{N-1}(s_i-a_i s_{\ell})}{\prod_{i\neq \ell, \ell'}(s_i-s_{\ell})} 
+\frac{\prod_{i=1}^{N-1}(s_i-a_i s_{\ell'})}{\prod_{i\neq \ell, \ell'}(s_i-s_{\ell'})} \right)\nonumber \\
&=0. 
\end{align}
Hence $F(s)$ is regular on the whole complex plane with respect to each $s_{\ell}$, 
and it is clear that $F(s)$ is bounded. 
This indicates that $F(s)$ is a constant function.
By the specialization  $s_i = a_{i-1}^{-1} a_{i-2}^{-1} \cdots a_{1}^{-1}s_1$ 
($i=2,\ldots,  N$), 
we have 
\begin{comment}
\begin{align}
F(s)&\underset{s_N\rightarrow a_{N-1}^{-1} s_{N-1}}{\longrightarrow} 
\sum_{k=1}^{N-1}(a_{N-1}s_k/s_{N-1})\frac{\prod_{i=1}^{N-2}(1-a_is_k/s_i)}{\prod_{\substack{i\neq k \\ i<N}} (1-s_k/s_i) }\nonumber \\
&\underset{s_{N-1}\rightarrow a_{N-2}^{-1} s_{N-2}}{\longrightarrow}
\sum_{k=1}^{N-2} (a_{N-1}a_{N-2}s_k/s_{N-1})
\frac{\prod_{i=1}^{N-3}\prod_{i=1}^{N-2}(1-a_is_k/s_i)}{\prod_{\substack{i\neq k \\ i<N}} (1-s_k/s_i) }\nonumber \\
&\cdots \nonumber\\
&\underset{s_{2}\rightarrow a_{1}^{-1} s_{1}}{\longrightarrow}
a_{N-1}a_{N-2}\cdots a_{1}. 
\end{align}
\end{comment}
\begin{align}
F(s)&=
\sum_{k=1}^{N}(a_{N-1}a_{N-2}\cdots a_{k})
\frac{\prod_{i=1}^{N-1}\left(1-\frac{a_ia_{i-1}\cdots a_1}{a_{k-1}a_{k-2}\cdots a_{1}}\right)}
{\prod_{\substack{1\leq i\leq N\\i\neq k}} \left(1-\frac{a_{i-1}a_{i-2}\cdots a_1}{a_{k-1}a_{k-2}\cdots a_{1}}\right) }\nonumber \\
&=a_{N-1}a_{N-2}\cdots a_{1}. 
\end{align}
This gives (\ref{eq: pf contiguity}).

Substituting $a_i=q^{\theta_i}$ into  (\ref{eq: pf contiguity})
yields 
\begin{align}
\prod_{i=1}^{N-1}q^{\theta_i}=&
\sum_{k=1}^{N-1}(-1)^{N-k}
\frac{q^{N-k}\prod_{i=k+1}^{N-1} s_i/s_k  }
{\prod_{i=k+1}^N (1-s_i/s_k)(1-q s_i/s_k)}
\frac{d_N^{\mathrm{Toda}}(\theta_1,\ldots, \theta_{k}-1,\ldots, \theta_{N-1}|q^{-\varepsilon_k} \cdot s)}
{d_N^{\mathrm{Toda}}(\theta_{1},\ldots , \theta_{N-1}|s)}\nonumber \\
&+
\frac{d_N^{\mathrm{Toda}}(\theta_1,\ldots, \theta_{N-1}|q^{-\varepsilon_N} \cdot s)}{d_N^{\mathrm{Toda}}(\theta_{1},\ldots , \theta_{N-1}|s)}. \label{eq: d_N rel}
\end{align}
By (\ref{eq: A_N-1 to A_N-2}) and (\ref{eq: d_N rel}), 
we obtain the formula (\ref{eq: contiguity}).
\end{proof}
\end{proposition}

\begin{proposition}\label{prop: rec. rel.}
The branching coefficients $e^{B_N/A_{N-1}}_{\theta}(s|q)$ 
satisfy the recursion relation
\begin{align}
&\sum_{i=1}^N \left( (1-q^{-\theta_i})s_i+(1-q^{\theta_i})s_i^{-1}   \right)e^{B_N/A_{N-1}}_{\theta}(s|q) \label{eq: recrel}\\
&=\sum_{k=1}^N
s_N 
\frac{(-1)^{N-k+1}q^{-\theta_N+\delta_{k,N}}q^{N-k} \prod_{i=k+1}^{N-1}(q^{-\theta_i+\theta_k-1}s_i/s_k)}
{\prod_{i=k+1}^{N}(1-q^{-\theta_i+\theta_k -1 } s_i/s_k) (1-q q^{-\theta_i+\theta_k-1} s_i/s_k) }
\, e^{B_N/A_{N-1}}_{(\theta_1\ldots, \theta_k-1,\ldots, \theta_N)}(s|q).
\nonumber 
\end{align}

\begin{proof}
By substituting (\ref{eq: e sol}) into (\ref{eq: recrel}) ,
it can be shown that 
(\ref{eq: recrel}) is equivalent to 
\begin{align}
\sum_{i=1}^{N}\left((1-q^{-\theta_i})s_i+(1-q^{\theta_i})s_i^{-1}\right)
=-\sum_{k=1}^N q^{-\theta_k}s_k
\frac{\prod_{i=1}^N (1-q^{\theta_k}s_i/s_k)(1-q^{\theta_k}/s_is_k ) }
{\prod_{i\neq k} (1-q^{\theta_k-\theta_i}s_i/s_k)(1-q^{\theta_k+\theta_i}/s_is_k ) }. \label{eq: pf branch rul}
\end{align}
By  replacing $q^{\theta_i}$ with generic parameters $Q_i$ 
and shifting $s_i$ to $Q_i s_i$, 
the equation (\ref{eq: pf branch rul}) becomes
\begin{align}
\sum_{i=1}^{N}\left((1-Q_i)s_i+(1-Q_i^{-1})s_i^{-1}\right)
=\sum_{k=1}^N s_k
\frac{\prod_{i=1}^N (1-Q_is_i/s_k)(1-Q_i^{-1}/s_is_k ) }
{\prod_{i\neq k} (1-s_i/s_k)(1-1/s_is_k ) }. \label{type B equiv}
\end{align}
The proof is completed by showing this equation. 
Regarding $s_i$'s as complex variables, we define the function
\begin{align}
F(s)&:=\sum_{k=1}^N s_k
\frac{\prod_{i=1}^N (1-Q_is_i/s_k)(1-Q_i^{-1}/s_is_k ) }
{\prod_{i\neq k} (1-s_i/s_k)(1-1/s_is_k ) }.%\\
%&=\sum_{k=1}^N s_k
%\frac{\prod_{i=1}^N (1-Q_is_i/s_k)(1-Q_i^{-1}/s_is_k ) }
%{\prod_{i\neq k} (1-s_i/s_k)(1-1/s_is_k ) }
\end{align}
A direct calculation shows that 
the residue at 
$s_{\ell}=s_{\ell'}^{\pm 1}$($\ell \neq \ell'$)
is
\begin{align}
\mathop{Res}_{s_{\ell}=s_{\ell'}^{\pm 1}} F(s)
=\lim_{s_{\ell} \rightarrow s_{\ell'}^{\pm 1}}F(s)(s_{\ell}-s_{\ell'})=0
\end{align}
and these singularities are removable. 
Hence, 
$F(s)$ is a regular with respect to each variable $s_{\ell}$ ($\ell=1,\ldots , n$)
on the complex plane except for the origin $0$ (and $\infty$). 
Therefore,  
for arbitrary $\ell$, 
the function $F(s)$ can be given by the Laurent series on $0<|s_{\ell}|<\infty$ 
\begin{align}
F(s)=\sum_{i\in \mathbb{Z}} C_i s_{\ell}^{i},
\end{align}
where $C_i$ is a function of 
$s_1\ldots, s_{\ell-1},s_{\ell+1},\ldots,s_N$. 
%=C_i(s_1\ldots, s_{\ell-1},s_{\ell+1},\ldots,s_N) \in \mathbb{C}$. 
Since the orders of the poles at $s_{\ell}=0$ and $s_{\ell}=\infty$ 
are at most $1$, we have $C_{i}=0$ ($i<-1$ or $i>1$). 
It can be shown that 
the residues at $s_{\ell}=0$ and $s_{\ell}=\infty$ are
\begin{align}
&C_{-1}=\mathop{Res}_{s_{\ell}=0} F(s)=1-Q_{\ell}^{-1},\\
&C_{1}=\mathop{Res}_{s_{\ell}=\infty} F(s)=1-Q_{\ell}. 
\end{align}
Therefore, with a constant $\widetilde{C}_0$ independent of $s_{i}$'s,  we can write 
\begin{align}
F(s)=\sum_{i=1}^{N}\left((1-Q_i)s_i+(1-Q_i^{-1})s_i^{-1}\right)+\widetilde{C}_0. 
\end{align} 
Furthermore, we obtain
%By  $s_{i}\rightarrow \sqrt{Q_i}^{-1}$ in order from $i=1$, 
\begin{comment}
\begin{align}
F(s)&\underset{s_1\rightarrow \sqrt{Q_1}^{-1}}{\longrightarrow} 
\sum_{k=2}^N s_k \frac{\prod_{i\geq 2}(1-Q_is_i/s_k)(1-Q_i^{-1}/s_is_k )}
{\prod_{\substack{i\geq 2 \\ i\neq k}}(1-s_i/s_k)(1-1/s_is_k )}\nonumber \\
&\underset{s_2\rightarrow \sqrt{Q_2}^{-1}}{\longrightarrow} 
\sum_{k=3}^N s_k \frac{\prod_{i\geq 3}(1-Q_is_i/s_k)(1-Q_i^{-1}/s_is_k )}
{\prod_{\substack{i\geq 3 \\ i\neq k}}(1-s_i/s_k)(1-1/s_is_k )}\nonumber\\
&\cdots\nonumber \\
&\underset{s_{N-1}\rightarrow \sqrt{Q_{N-1}}^{-1}}{\longrightarrow} 
s_N(1-Q_N)(1-Q_{N}^{-1}/s_N^2)\nonumber \\
&\underset{s_{N}\rightarrow \sqrt{Q_{N}}^{-1}}{\longrightarrow} 
0\quad =\widetilde{C}_0. 
\end{align}
\end{comment}
\begin{align}
\widetilde{C}_0&=F(\sqrt{Q_1}^{-1},\sqrt{Q_2}^{-1}\ldots,\sqrt{Q_N}^{-1})\nonumber \\
&=\sum_{k=1}^N \sqrt{Q_k}^{-1}
\frac{\prod_{i=1}^N (1-\sqrt{Q_iQ_k})(1-\sqrt{Q_k/Q_i}) }
{\prod_{i\neq k} (1-\sqrt{Q_k/Q_i})(1-\sqrt{Q_iQ_k}) }\nonumber \\
&=0. 
\end{align}
This gives (\ref{type B equiv}). 
\end{proof}
\end{proposition}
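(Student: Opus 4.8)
The plan is to exploit the fact that both sides of \eqref{eq: recrel} are assembled from the completely explicit product \eqref{eq: e sol}, so that the entire statement reduces to a rational function identity in $s_1,\dots,s_N$ and the quantities $q^{\theta_i}$. First I would compute the shift ratio $e^{B_N/A_{N-1}}_{(\theta_1,\dots,\theta_k-1,\dots,\theta_N)}(s|q)/e^{B_N/A_{N-1}}_{\theta}(s|q)$ obtained by decreasing $\theta_k$ by one. Applying the elementary relation $(a;q)_n/(a;q)_{n-1}=1-q^{n-1}a$ to each $q$-Pochhammer factor appearing in \eqref{eq: e sol}, this ratio factors into a product of simple linear terms in $q^{\theta_i}$ and $s_i^{\pm1}$. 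Dividing the whole relation \eqref{eq: recrel} through by $e^{B_N/A_{N-1}}_{\theta}(s|q)$ then clears every Pochhammer symbol and leaves a purely rational identity.

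Carrying out this cancellation, I expect \eqref{eq: recrel} to collapse to the compact identity \eqref{eq: pf branch rul}. Since each $\theta_i$ now enters only through $q^{\theta_i}$, the next step is to replace every $q^{\theta_i}$ by a generic indeterminate $Q_i$, so that it suffices to prove a genuine rational identity in $Q_1,\dots,Q_N,s_1,\dots,s_N$. The change of variables $s_i\mapsto Q_i s_i$ then symmetrizes the right-hand side and brings the target into the form \eqref{type B equiv}, whose left-hand side is the elementary combination $\sum_i\bigl((1-Q_i)s_i+(1-Q_i^{-1})s_i^{-1}\bigr)$ and whose right-hand side is $F(s):=\sum_{k}s_k\prod_{i}(1-Q_is_i/s_k)(1-Q_i^{-1}/s_is_k)\big/\prod_{i\neq k}(1-s_i/s_k)(1-1/s_is_k)$.

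The heart of the argument is then a Liouville-type analysis of $F(s)$ in each variable $s_\ell$. I would first verify that the apparent poles at $s_\ell=s_{\ell'}^{\pm1}$ (for $\ell'\neq\ell$) are removable: the residues coming from the $k=\ell$ and $k=\ell'$ summands cancel in pairs, so $F$ is regular on $0<|s_\ell|<\infty$. A degree count on numerator and denominator shows that the poles at $s_\ell=0$ and $s_\ell=\infty$ are at most simple, hence $F=C_{-1}s_\ell^{-1}+C_0+C_1 s_\ell$ in each variable, with residues evaluating to $C_{-1}=1-Q_\ell^{-1}$ and $C_1=1-Q_\ell$. Matching these across all $\ell$ pins down $F$ up to a single additive constant $\widetilde{C}_0$, which I would fix by the specialization $s_i=Q_i^{-1/2}$: there the $i=k$ factor of every numerator vanishes, so $F=0$, while the linear part $\sum_i\bigl((1-Q_i)s_i+(1-Q_i^{-1})s_i^{-1}\bigr)$ vanishes termwise as well, forcing $\widetilde{C}_0=0$ and establishing \eqref{type B equiv}. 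The main obstacle is bookkeeping: organizing the Pochhammer cancellations in the first step so that \eqref{eq: recrel} genuinely reduces to \eqref{eq: pf branch rul} without sign or exponent errors; once the problem is recast as \eqref{type B equiv}, the residue argument is clean and self-contained.
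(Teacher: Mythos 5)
Your proposal is correct and follows essentially the same route as the paper's own proof: reduction of \eqref{eq: recrel} to the rational identity \eqref{eq: pf branch rul} via Pochhammer ratios, the substitution $q^{\theta_i}\mapsto Q_i$ with the shift $s_i\mapsto Q_i s_i$ to reach \eqref{type B equiv}, and the same Liouville-type residue analysis of $F(s)$ with the constant fixed by the specialization $s_i=Q_i^{-1/2}$. Your added remark that the linear part also vanishes termwise at that specialization is a small but genuine clarification of why $\widetilde{C}_0=0$ follows, a point the paper leaves implicit.
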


\begin{proof}[Proof of Theorem \ref{thm: branchin rule}]
The action of $\mathsf{D}^{B_N{\rm Toda}}(x|s|q)$ on the right hand side of  (\ref{eq: branchi})
gives  
\begin{align}\label{eq: act D_B on branchi}
&\mathsf{D}^{B_N{\rm Toda}}(x|s|q) \left(\mbox{RHS of (\ref{eq: branchi})}\right)\nonumber \\
&=
\sum_{\theta\in \mathbb{Z}_{\geq 0}^{N}} 
e^{B_N/A_{N-1}}_{\theta}(s|q) \prod_{i=1}^N x_i^{-\theta_i}\cdot 
\Big\{ \mathsf{D}^{A_{N-1}{\rm Toda}}(x|s|q) -q^{-\theta_N} s_N/x_N T_{q,x_N} \nonumber \\
&\qquad +\mathsf{D}^{A_{N-1}{\rm Toda}}((x_{N-i+1}^{-1})_{i=1}^N|(q^{\theta_{N-i+1}} s_{N-i+1}^{-1} )_{i=1}^N|q)\Big\}
f^{A_{N-1}{\rm Toda}}(x|(q^{-\theta_i} s_{i})_{i=1}^N|q)\nonumber \\
&=
\sum_{\theta\in \mathbb{Z}_{\geq 0}^{N}} 
e^{B_N/A_{N-1}}_{\theta}(s|q) \prod_{i=1}^N x_i^{-\theta_i}\cdot 
\Big\{ \sum_{i=1}^{N} q^{-\theta_i}s_i+\sum_{i=1}^Nq^{\theta_i}s_i^{-1}
 -q^{-\theta_N} s_N/x_N T_{q,x_N} \Big\}\\
&\qquad\qquad   \times f^{A_{N-1}{\rm Toda}}(x|(q^{-\theta_i} s_{i})|q).\nonumber 
\end{align}
Here, 
we used Fact \ref{fact: A Toda} and the symmetry 
\begin{align}
f^{A_{N-1}{\rm Toda}}(x|s|q)
=f^{A_{N-1}{\rm Toda}}((x_{N-i+1}^{-1})_{1\leq i\leq N}|(s_{N-i+1}^{-1} )_{1\leq i \leq N}|q).
\end{align} 
By Proposition \ref{prop: contiguity}, we have 
\begin{align}
&\mathsf{D}^{B_N{\rm Toda}}(x|s|q) \left(\mbox{RHS of (\ref{eq: branchi})}\right)\nonumber \\
&=\sum_{\theta\in \mathbb{Z}_{\geq 0}^{N}} 
e^{B_N/A_{N-1}}_{\theta}(s|q) \prod_{i=1}^N x_i^{-\theta_i}\cdot 
\Big\{ \sum_{i=1}^{N} (q^{-\theta_i}s_i+q^{\theta_i}s_i^{-1})
f^{A_{N-1}{\rm Toda}}(x|(q^{-\theta_l} s_{l})|q)  \nonumber \\
&\qquad \qquad  -q^{-\theta_N}  \sum_{k=1}^N (-1)^{N-k} (s_N/x_k)
\frac{q^{N-k} \prod_{i=k+1}^{N-1}(q^{-\theta_i+\theta_k}s_i/s_k)}
{\prod_{i=k+1}^{N}(1-q^{-\theta_i+\theta_k} s_i/s_k) (1-q q^{-\theta_i+\theta_k} s_i/s_k) }\nonumber \\
&\qquad \qquad\qquad \qquad \qquad \qquad \qquad \qquad \qquad \qquad \qquad 
 \times f^{A_{N-1}{\rm Toda}}(x|q^{-\varepsilon_k}\cdot (q^{-\theta_l} s_{l})_{1\leq l\leq N}|q) 
\Big\} \nonumber \\
&=\sum_{\theta\in \mathbb{Z}_{\geq 0}^{N}} \prod_{i=1}^N x_i^{-\theta_i} \cdot
\Big\{
 \sum_{i=1}^{N} (q^{-\theta_i}s_i+q^{\theta_i}s_i^{-1})e^{B_N/A_{N-1}}_{\theta}(s|q) 
  \\
&\qquad  +\sum_{k=1}^N  s_N
\frac{(-1)^{N-k+1} q^{N-k-\theta_N+\delta_{k,N}} 
\prod_{i=k+1}^{N-1}(q^{-\theta_i+\theta_k-1}s_i/s_k)}
{\prod_{i=k+1}^{N}(1-q^{-\theta_i+\theta_k-1} s_i/s_k) (1-q q^{-\theta_i+\theta_k-1} s_i/s_k) }
e^{B_N/A_{N-1}}_{(\theta_1,\ldots, \theta_k-1,\ldots, \theta_N)}(s|q) \Big\} 
\nonumber \\
&\qquad \qquad\qquad \qquad \qquad \qquad \qquad \qquad \qquad \qquad \qquad 
\times  f^{A_{N-1}{\rm Toda}}(x| (q^{-\theta_l} s_{l})_{1\leq l\leq N}|q),\nonumber 
\end{align}
where we have used that $e^{B_N/A_{N-1}}_{\theta}=0$
if $\theta_j=-1$ for some $j$. 
Proposition \ref{prop: rec. rel.}
shows that this is equal to 
$\sum_{i=1}^{N}(s_i+s_i^{-1})\cdot \left(\mbox{RHS of (\ref{eq: branchi})}\right)$. 
This completes the proof. 
\end{proof}

\section*{acknowledgment}

The authors would like to thank B. Feigin, M. Fukuda,  M. Noumi, and L. Rybnikov
for valuable discussions. 
The researches of J.S. and H.A are partially  supported by JSPS KAKENHI 
(J.S: 19K03512, H.A: 19K03530). 
Y.O. is partially supported by Grant-in-Aid for JSPS Research Fellow 
(18J00754).

%%%%%%%%%%%%%%%%%%%%%%%%%%%%%%%%%%%%%
%%%%%%%%%%%%%%%%%%%%%%%%%%%%%%%%%%%%%
%%%%%%%%%%%%%%%%%%%%%%%%%%%%%%%%%%%%%
%\bibliographystyle{myutcaps}
%\bibliography{myreferences}
%%%%%%%%%%%%%%%%%%%%%%%%%%%%%%%%%%%%%
%%%%%%%%%%%%%%Reference%%%%%%%%%%%%%%%%%
%%%%%%%%%%%%%%%%%%%%%%%%%%%%%%%%%%%%%

\providecommand{\href}[2]{#2}\begingroup\raggedright\endgroup

\end{document}